\documentclass{amsart}
\usepackage{geometry}
\usepackage{enumitem,xcolor,amssymb}

\usepackage{setspace,graphicx}
\usepackage{cite}
\theoremstyle{theorem}
\newtheorem{theorem}{Theorem}[section]
\newtheorem{corollary}[theorem]{Corollary}
\newtheorem{lemma}[theorem]{Lemma}
\theoremstyle{proposition}
\newtheorem{proposition}[theorem]{Proposition}
\theoremstyle{definition}

\theoremstyle{remark}

\newcommand{\R}{\mathbb{R}}
\newcommand{\Sc}{\mathcal{S}}
\newcommand{\Z}{\mathbb{Z}}
\newcommand{\bH}{\mathbf{H}}

\newcommand{\inpr}[3][]{\left\langle#2 \,,\, #3\right\rangle_{#1}}

\newcommand{\oldinpr}[3][]{{\lll\!#2 \,,\, #3\!\ggg_{#1}}}

\newcommand{\oldnorm}[2][]{{|\!\|#2\|\!|_{#1}}}

\title{Products and Convolutions in Hermite-Sobolev spaces}

\author{Suprio Bhar}
\address{Suprio Bhar, Department of Statistics and Data Science, Indian Institute of Technology Kanpur, Kalyanpur, Kanpur - 208016, India.}
\email{suprio@iitk.ac.in}

\author{Rajeev Bhaskaran}
\address{Harish-Chandra Research Institute, Chhatnag Rd, Jhusi, Prayagraj, Uttar Pradesh - 211019, India.}
\email{brajeev58j@gmail.com}
\begin{document}

\begin{abstract}
In this paper, we show that the product, or equivalently the convolutions of two functions in the Hermite-Sobolev spaces $\mathcal{S}_p(\mathbb{R}^d)$  is again in the same space, for $p$ depending on the dimension $d$. As a consequence, for such $p$ we show that the product, or equivalently the convolutions of $\phi \in \mathcal{S}_p(\mathbb{R}^d)$ and $\psi \in \mathcal{S}_{-p}(\mathbb{R}^d)$ is in $\mathcal{S}_{-p}(\mathbb{R}^d)$. As a further consequence, we show that the operators of translation by $x$ on $\mathcal{S}_p(\mathbb{R}^d)$ are bounded uniformly in $x \in \mathbb{R}^d$.
\end{abstract}

\keywords{Hermite-Sobolev space, Non-Linear PDE, Burgers' Equation, Evolution Equation, Algebra}
\subjclass[2010]{Primary: 46A11, 46F10, 46H99  Secondary: 35A23, 46C05, 47J05}

\maketitle

\section{Introduction}
Stochastic PDEs with solutions in the dual of Nuclear spaces, more specifically, in the spaces of distributions are well-known in the literature \cite{Ito-monograph, Kallianpur-Xiong}. In this regard, the Hermite-Sobolev spaces $\mathcal{S}_p(\mathbb{R}^d), p \in \R$ (see \cite{Ito-monograph}) can be used to analyze Stochastic PDEs in the space of tempered distributions. In \cite{Rajeev-IJPAM}, certain non-linear operators were used, as coefficients in the Stochastic PDE. For example, the drift coefficient had the form (see \cite[p. 232]{Rajeev-IJPAM})
\[L\phi = \frac{1}{2} \langle \sigma, \phi \rangle^2 \frac{d^2}{dx^2}\phi - \langle b, \phi \rangle \frac{d}{dx}\phi.\]
Here, the non-linearities arise due to scalar multiplication by $\langle \sigma, \phi \rangle^2$ and $\langle b, \phi \rangle$.

On the other hand, if we wish to consider PDEs or SPDEs, like Burger's equation or linear equations of the type considered in \cite{Arvind-Monotonicity} in the framework of Hermite-Sobolev spaces $\mathcal{S}_p(\mathbb{R}^d), p \in \R$, then we need to consider multiplication of the form $(\phi_1, \phi_2) \mapsto \phi_1\phi_2$ in these spaces. For example, in the case of Burgers' equation
\[\frac{\partial}{\partial t} u + u  \frac{\partial}{\partial x} u = 0, \, u(0, x) = f(x)\]
the non-linear term is in the form $\phi\frac{d}{dx} \phi$. This motivates the contents of this paper and we show that the Hermite-Sobolev spaces $\mathcal{S}_p(\mathbb{R}^d)$ are algebras for $p$ depending on the dimension $d$ (see Theorem \ref{main-result-2}). A similar result in the framework of $W^{2,s}$, the classical Sobolev spaces, is known in the literature (see \cite{BrezisBk, Kato-article, PalaisBk}). As an application of Theorem \ref{main-result-2}, we also extend the operation of multiplication of tempered distributions by smooth functions to multiplication by functions from appropriate Hermite-Sobolev spaces (see Corollary \ref{main-result-3}). This allows us to consider products and convolutions of a function in $\Sc_p$ with a distribution in $\Sc_{-p}$. As a further consequence, in Theorem \ref{translation-bound} we establish that the operators $\tau_x$ of translation by $x \in \R^d$ are uniformly bounded, improving the bound of \cite[Theorem 2.1]{BR-ST-Heat}.

By Theorem \ref{main-result-2}, the product term in the Burgers' equation above makes sense in $\Sc_{p - 1}$ for $\phi \in \Sc_p$, for $p$ sufficiently large. One may therefore look for solutions of such equations in $\Sc_p$ spaces. Clearly, this is not specific to the Burgers' equation.

We briefly discuss our approach, which seems to be new. First, using the well-known invariance of the Hermite-Sobolev spaces $\Sc_p$ under the Fourier transform, we reduce the problem of showing that the product $\phi_1\phi_2$ is in $\Sc_p$ to one of showing a convolution $\psi_1\ast\psi_2$ is in $\Sc_p$ (see Theorem \ref{fourier-convolution}). This, by definition of the specific choice of norms on $\Sc_p$, reduces to showing that functions of the form
\[x^\alpha \partial^\beta(\psi_1\ast\psi_2)\]
are in $\mathcal{L}^2$ for multi-indices $\alpha$ and $\beta$ satisfying $|\alpha| + |\beta| \leq 2p$. 

At this stage, rather than splitting the derivative term $\partial^\beta(\psi_1\ast\psi_2)$ over the two factors $\psi_1$ and $\psi_2$, like $\partial^{\beta - k}\psi_1$ and $\partial^k \psi_2$, and using the regularities of $\psi_1$ and $\psi_2$, we put all the derivatives on one of the factors, say $\psi_2$, to get $x^\alpha (\psi_1\ast\partial^\beta\psi_2)$.

To obtain $\mathcal{L}^2$-norm bounds for $x^\alpha (\psi_1\ast\partial^\beta\psi_2)$, it suffices (by a Binomial expansion) to estimate $\mathcal{L}^2$-norms of 
\[y \mapsto \int_{\R^d} (y - x)^{\alpha-\gamma} \,\psi_1(y - x) \, x^\gamma \partial^\beta \psi_2(x)\, dx\] for all multi-indices $\alpha, \beta, \gamma$ satisfying $0 \leq \gamma \leq \alpha, |\alpha| + |\beta| \leq 2p$ (see Lemma \ref{equivalent}). Now, at this stage we use Jensen's inequality to get the estimate on the $\mathcal{L}^2$-norms of the above functions. Note that, we can use either of  $|\cdot|^{\alpha-\gamma} |\partial^\beta \psi_2(\cdot)|$ or $|\cdot|^\gamma |\psi_1|$ as the $\mathcal{L}^1$ factor in Jensen's inequality (see Proposition \ref{Jensen}).

The resulting upper bounds are finite, provided the multi-indices $\alpha, \beta$ and $\gamma$ satisfy $|\alpha - \gamma| + |\beta| + r \leq 2p$ or $|\gamma| + r \leq 2p$ (see Theorem \ref{main-result-1}) with $r$ given by Lemma \ref{L1-bound-by-Sp}. That one of the two inequalities holds is verified in the proof of Theorem \ref{main-result-2}.

\section{Notations and Main Results}

\subsection{Preliminaries: Topology on Schwartz space}

Let $\Sc(\mathbb{R}^d)$ denote the space of complex valued rapidly decreasing smooth functions on $\R^d$, with the dual space $\Sc^\prime(\mathbb{R}^d)$ the space of tempered distributions. Let $\mathbb{Z}^d_+:=\{\alpha=(\alpha_1,\cdots, \alpha_d): \; \alpha_i \text{ non-negative integers}\}$. If $\alpha\in\mathbb{Z}^d_+$, we define $|\alpha|:=\alpha_1+\cdots+\alpha_d$. The topology on $\Sc(\mathbb{R}^d)$ is given by a family of seminorms $p_n, n = 0, 1, 2, \cdots$ (see \cite{Ito-monograph, FollandBk}) where
\[p_n(f) := \sup_{x \in \R^d} \left[(1 + \|x\|)^n \max_{\alpha : |\alpha| \leq n} \left|\frac{\partial^{|\alpha|}}{\partial_{x_1}^{\alpha_1}\cdots \partial_{x_d}^{\alpha_d}}f(x) \right|\right], \forall f \in \Sc(\mathbb{R}^d)\]
where $\|x\|$ denotes the usual Euclidean norm for $x \in \R^d$.

For $p \in \R$, consider the increasing family of norms $\oldnorm[p]{\cdot}$, defined by the inner products
\begin{equation}
\oldinpr[p]{f}{g}
:=\sum_{n\in\mathbb{Z}^d_+}(2|n|+d)^{2p}\langle f,h_n\rangle \overline{\langle g,h_n\rangle},\ \ \ f,g\in\Sc(\mathbb{R}^d).
\end{equation}
In the above equation, $\{h_n: n\in\mathbb{Z}^d_+\}$ is an orthonormal basis for $\mathcal{L}^2(\R^d,dx)$ given by the Hermite functions and $\langle\cdot,\cdot\rangle$ is the usual
inner product in $\mathcal{L}^2(\R^d,dx)$. The Hermite-Sobolev spaces $\Sc_p(\mathbb{R}^d), p \in \R$ are defined as the completion of $\Sc(\mathbb{R}^d)$ in
$\oldnorm[p]{\cdot}$. Note that the dual space $\Sc_p^\prime(\mathbb{R}^d)$ is isometrically isomorphic with $\Sc_{-p}(\mathbb{R}^d)$ for $p\geq 0$. We also have $\Sc(\mathbb{R}^d) = \bigcap_{p}(\Sc_p(\mathbb{R}^d),\oldnorm[p]{\cdot}), \Sc^\prime(\mathbb{R}^d) = \bigcup_{p>0}(\Sc_{-p}(\mathbb{R}^d),\oldnorm[-p]{\cdot})$ and $\Sc_0(\mathbb{R}^d) = \mathcal{L}^2(\R^d)$. The following basic relations hold for the $\Sc_p(\mathbb{R}^d)$ spaces: for $0<q<p$, \[\Sc(\mathbb{R}^d)\subset\Sc_p(\mathbb{R}^d)\subset\Sc_q(\mathbb{R}^d)\subset\mathcal L^2(\R^d)=\Sc_0(\mathbb{R}^d)\subset\Sc_{-q}\subset\Sc_{-p}(\mathbb{R}^d)\subset\Sc^\prime(\mathbb{R}^d).\]
The topology on $\Sc(\R^d)$ given by the norms $\oldnorm[p]{\cdot}, p \in \Z_+$ is the same as the usual topology on $\Sc(\R^d)$ (see \cite[Proposition 1.1]{Rajeev-Seminaire}).

For $x \in \R^d$, recall that the translation operator $\tau_x : \Sc(\R^d) \to \Sc(\R^d)$ is given by
\[(\tau_x \phi)(y) := \phi (y - x), \forall y \in \R^d\]
for any $\phi \in \Sc(\R^d)$ and is extended via duality to $\tau_x : \Sc^\prime(\R^d) \to \Sc^\prime(\R^d)$ as a linear operator
\[\inpr{\tau_x \phi}{\psi} = \inpr{\phi}{\tau_{-x} \psi}, \, \forall \phi \in \Sc^\prime(\R^d), \psi \in \Sc(\R^d)\]
We also recall that $\tau_x : \Sc_p(\R^d) \to \Sc_p(\R^d)$ is a bounded linear operator for all $x \in \R^d$ and $p \in \R$ (see \cite[Theorem 2.1]{BR-ST-Heat}).

\subsection{A family of alternative inner-products generating the usual topology on the Schwartz space}

For any non-negative integer $p$, define for $\phi, \psi \in \Sc(\R^d)$,
\[\inpr[p]{\phi}{\psi}:= \sum_{|\alpha| + |\beta|\leq 2p}\int_{\R^d} y^\alpha\partial^\beta \phi(y)\, \overline{y^\alpha\partial^\beta \psi(y)}\, dy,\]
for $\alpha,\beta\in\Z^d_+$. Here, $y^\alpha = y_1^{\alpha_1} \cdots y_d^{\alpha_d}$ and $|y|^\alpha = |y_1|^{\alpha_1} \cdots |y_d|^{\alpha_d}$. Completing $\Sc(\R^d)$ with the above inner-product gives us the Hermite-Sobolev spaces $\Sc_p, p \in \Z_+$. We denote the corresponding norms by $\|\cdot\|_p$. It is known that the topology on $\Sc$ generated by $\|\cdot\|_p, p = 0, 1, \cdots$ is the same as the usual topology. We recall the next result.

\begin{proposition}[{\cite[Proposition 3.3]{BR-ST-Heat}}]\label{norm-equivalence}
For all nonnegative integers $p$, there exist constants $C_1 = C_1(p) > 0$ and $C_2 = C_2(p) > 0$ such that
\[\oldnorm[p]{\phi} \leq C_1 \|\phi\|_p \leq C_2 \oldnorm[p]{\phi}, \forall \phi \in \Sc(\mathbb{R}^d).\]
\end{proposition}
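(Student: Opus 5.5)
The plan is to pass through the Hermite operator $H = -\Delta + \|x\|^2$. Its eigenfunctions are the Hermite functions: $H h_n = (2|n|+d) h_n$. Hence, for $\phi \in \Sc(\R^d)$ and $p \in \Z_+$,
\[\oldnorm[p]{\phi}^2 = \sum_n (2|n|+d)^{2p} |\langle \phi, h_n\rangle|^2 = \|H^p \phi\|_{\mathcal{L}^2}^2 .\]
This holds because $H$ is symmetric on $\Sc(\R^d)$, so $\langle H^p\phi, h_n\rangle = (2|n|+d)^p\langle \phi, h_n\rangle$, and then Parseval applies. Both inequalities therefore reduce to comparing $\|H^p\phi\|_{\mathcal{L}^2}$ with $\|\phi\|_p$.

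\emph{First inequality.} Expand $H^p = (-\Delta + \|x\|^2)^p$ as a noncommutative polynomial in the operators $x_j$ and $\partial_j$. Each word has total degree at most $2p$. Using the commutation relation $[\partial_j, x_k] = \delta_{jk}$ repeatedly, I normal-order every word as a finite linear combination of terms $x^\alpha \partial^\beta$ with $|\alpha|+|\beta| \le 2p$; commuting only lowers the degree, so the bound is preserved. The triangle inequality then gives
\[\|H^p\phi\|_{\mathcal{L}^2} \le C \sum_{|\alpha|+|\beta|\le 2p} \|x^\alpha\partial^\beta\phi\|_{\mathcal{L}^2} \le C' \|\phi\|_p .\]
This proves $\oldnorm[p]{\phi} \le C_1\|\phi\|_p$.

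\emph{Second inequality.} This is the main obstacle: I must show $\|x^\alpha\partial^\beta \phi\|_{\mathcal{L}^2} \le C\|H^p\phi\|_{\mathcal{L}^2}$ whenever $|\alpha|+|\beta|\le 2p$. I will use the creation and annihilation operators
\[A_j = x_j + \partial_j, \qquad A_j^\dagger = x_j - \partial_j,\]
which give $x_j = (A_j + A_j^\dagger)/2$ and $\partial_j = (A_j - A_j^\dagger)/2$. So $x^\alpha\partial^\beta$ is a linear combination of words in $A_j, A_j^\dagger$ of length $m = |\alpha|+|\beta| \le 2p$. On the Hermite basis these act by
\[A_j h_n = \sqrt{2n_j}\, h_{n-e_j}, \qquad A_j^\dagger h_n = \sqrt{2n_j+2}\, h_{n+e_j}.\]
Consequently a word $W$ of length $m$ maps $h_n$ to $c_{W,n}\, h_{n+\delta}$, where $\delta$ is a fixed shift depending only on $W$ and $|c_{W,n}| \le (2|n|+d+2m)^{m/2} \le C(2|n|+d)^{m/2}$.

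Distinct $n$ go to distinct $n+\delta$, so the images are orthogonal. Hence
\[\|W\phi\|^2 = \sum_n |c_{W,n}|^2|\langle\phi,h_n\rangle|^2 \le C\sum_n (2|n|+d)^{m}|\langle\phi,h_n\rangle|^2 \le C \oldnorm[p]{\phi}^2,\]
where the last step uses $m \le 2p$ and $2|n|+d \ge 1$. Summing over all words and all pairs $(\alpha,\beta)$ gives $\|\phi\|_p \le C\oldnorm[p]{\phi}$, which yields the second inequality after adjusting constants.

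The delicate point is the uniform coefficient bound $|c_{W,n}| \le C(2|n|+d)^{m/2}$. It follows because each factor multiplies by at most $\sqrt{2|n'|+2}$, where $|n'| \le |n|+m$ at every intermediate step. All manipulations act on Schwartz functions, on which $H$, $A_j$, $A_j^\dagger$ and the Hermite expansion behave well, so no domain issues arise.
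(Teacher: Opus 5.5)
The paper does not prove this proposition; it imports it from \cite{BR-ST-Heat}, so there is no in-paper argument to compare yours with. Your proof is correct and is the standard one. The identity $\oldnorm[p]{\phi}=\|H^p\phi\|_{\mathcal{L}^2}$ together with normal ordering of $H^p$ gives the first inequality. The ladder-operator computation on the Hermite basis gives the second; your word-by-word coefficient bound is the iterated form of the half-order estimates $\oldnorm[q]{x_j\phi}+\oldnorm[q]{\partial_j\phi}\le C\oldnorm[q+1/2]{\phi}$, which is how such equivalences are usually organized.

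Two small points should be tightened.

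First, you derive $\|W\phi\|^2=\sum_n|c_{W,n}|^2|\langle\phi,h_n\rangle|^2$ from $W\phi=\sum_n\langle\phi,h_n\rangle Wh_n$. That step needs the Hermite series of $\phi$ to converge in a topology where $W$ is continuous. Convergence of Hermite expansions in $\Sc$ is usually proved by exactly the estimate you are establishing, so leaning on ``the Hermite expansion behaves well'' risks circularity. It is cleaner to compute coefficients through the formal adjoint. Since $A_j^\dagger$ is the formal adjoint of $A_j$ on $\Sc$, you get $\langle W\phi,h_k\rangle=\langle\phi,W^\dagger h_k\rangle=c_{W,k-\delta}\langle\phi,h_{k-\delta}\rangle$, which is zero when $k-\delta\notin\mathbb{Z}^d_+$. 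Parseval for $W\phi\in\mathcal{L}^2$ then gives the identity directly.

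Second, with $|n'|\le|n|+m$ each factor is at most $\sqrt{2|n|+2m+2}$, and this exceeds $\sqrt{2|n|+d+2m}$ when $d=1$. Either note that the input to each factor satisfies $|n'|\le|n|+m-1$, or absorb the discrepancy into the constant, since $|c_{W,n}|\le C(2|n|+d)^{m/2}$ is all you use.
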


\subsection{Main results}
We require the following well-known integrability condition in our arguments. By a transformation into polar co-ordinates, we have
\begin{equation}\label{integrability}
\int_{\R^d} \frac{1}{(1 + \|x\|^2)^r}\, dx = C_d \int_0^\infty \frac{t^{d-1}}{(1 + t^2)^r} \, dt < \infty,    
\end{equation}
for some positive constant $C_d$, depending on $d$,
provided $r > \frac{d + 1}{2}$.  In the rest of this article, we consider $p$ is a non-negative integer.

\begin{lemma}\label{L1-bound-by-Sp}
For any multi-index $\alpha, \beta$, we have
\[\||x|^\alpha \partial^\beta \psi\|_{\mathcal{L}^1(\R^d)} \leq C_{d, r} \|\psi\|_p, \forall \psi \in \Sc_p(\R^d),\]
provided $r > \frac{d + 1}{2}$ and $|\alpha| + |\beta| + r \leq 2p$. Here, $C_{d, r}$ is a positive constant depending only on $d$ and $r$.
\end{lemma}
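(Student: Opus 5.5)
The plan is to use a weighted Cauchy--Schwarz inequality, splitting off an integrable weight given by \eqref{integrability}, and then to bound the remaining weighted $\mathcal{L}^2$-norm by a finite sum of the terms $\|y^{\alpha'}\partial^{\beta}\psi\|_{\mathcal{L}^2}$ that appear in the definition of $\|\cdot\|_p$.

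\textbf{Step 1 (Cauchy--Schwarz).} First take $\psi\in\Sc(\R^d)$. Writing $|x|^\alpha|\partial^\beta\psi(x)| = (1+\|x\|^2)^{-r/2}\cdot(1+\|x\|^2)^{r/2}|x|^\alpha|\partial^\beta\psi(x)|$, the Cauchy--Schwarz inequality gives
\[\||x|^\alpha \partial^\beta \psi\|_{\mathcal{L}^1} \leq \left(\int_{\R^d}\frac{dx}{(1+\|x\|^2)^{r}}\right)^{1/2}\left\|(1+\|x\|^2)^{r/2}|x|^\alpha\partial^\beta\psi\right\|_{\mathcal{L}^2}.\]
The first factor is finite by \eqref{integrability}, since $r>\frac{d+1}{2}$, and it depends only on $d$ and $r$.

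\textbf{Step 2 (integer powers).} Since $r$ need not be an integer, set $m:=\lceil r\rceil$. Because $|\alpha|+|\beta|$ and $2p$ are integers, the hypothesis $|\alpha|+|\beta|+r\le 2p$ implies $|\alpha|+|\beta|+m\le 2p$. We then bound $(1+\|x\|^2)^{r/2}\le 2^{r/2}\max(1,\|x\|)^{r}\le 2^{r/2}(1+\|x\|^{m})$. Next, $\|x\|^m\le (\sqrt d\,\max_i|x_i|)^m \le d^{m/2}\sum_{i=1}^d |x_i|^m$. Hence
\[(1+\|x\|^2)^{r/2}|x|^\alpha|\partial^\beta\psi(x)| \le C_{d,r}\Big(|x^\alpha\partial^\beta\psi(x)| + \sum_{i=1}^d |x^{\alpha+me_i}\partial^\beta\psi(x)|\Big).\]

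\textbf{Step 3 (comparison with $\|\cdot\|_p$).} Each multi-index on the right satisfies $|\alpha|+|\beta|\le 2p$ and $|\alpha+me_i|+|\beta|\le 2p$. So the $\mathcal{L}^2$-norm of each term is at most $\|\psi\|_p$ by the definition of $\inpr[p]{\cdot}{\cdot}$. The triangle inequality then gives the bound with a constant depending only on $d$ and $r$, because $m$ is determined by $r$.

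\textbf{Step 4 (density).} To pass from $\Sc$ to all of $\Sc_p(\R^d)$, note that $\psi\mapsto|x|^\alpha\partial^\beta\psi$ is a linear map from $(\Sc,\|\cdot\|_p)$ into $\mathcal{L}^1$ that is bounded by Steps 1--3. It therefore extends continuously to the completion $\Sc_p$. The extension agrees with the distributional $|x|^\alpha\partial^\beta\psi$, since $\|\cdot\|_p$-convergence implies convergence in $\Sc'$ and weighted $\mathcal{L}^2$ convergence of $x^{\alpha}\partial^\beta\psi_n$.

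The only delicate point is Step 2, namely rounding $r$ up to an integer without violating the constraint $\le 2p$. This is resolved by the integrality of $|\alpha|+|\beta|$ and $2p$.
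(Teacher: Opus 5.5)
Your proof is correct and follows the same route as the paper: Cauchy--Schwarz against the integrable weight $(1+\|x\|^2)^{-r}$ from \eqref{integrability}, then bounding the remaining weighted $\mathcal{L}^2$-norm by the terms that define $\|\psi\|_p$. Three of your steps fill in details the paper leaves implicit: the rounding to $m=\lceil r\rceil$ (justified because $2p-|\alpha|-|\beta|$ is an integer), the density extension to $\Sc_p$, and expanding only $(1+\|x\|^2)^{r/2}$ while keeping $|x|^\alpha$ separate, which the paper does not do since it absorbs $|x|^{2\alpha}$ into $(1+\|x\|^2)^{|\alpha|}$; that last choice is what makes your constant depend only on $d$ and $r$, as the statement claims.
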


\begin{proof}
Applying Cauchy-Schwarz inequality, we have
\begin{align*}
  \int_{\R^d} |x|^\alpha |\partial^\beta \psi(x)|\, dx &= \int_{\R^d} \frac{1}{(1 + \|x\|^2)^{\frac{r}{2}}} (1 + \|x\|^2)^{\frac{r}{2}} |x|^\alpha |\partial^\beta \psi(x)|\, dx\\
  &\leq \left(\int_{\R^d} \frac{1}{(1 + \|x\|^2)^r} \, dx\right)^\frac{1}{2}  \left(\int_{\R^d}(1 + \|x\|^2)^r |x|^{2\alpha} |\partial^\beta \psi(x)|^2\, dx\right)^{\frac{1}{2}}\\
  &\leq \left(\int_{\R^d} \frac{1}{(1 + \|x\|^2)^r} \, dx\right)^\frac{1}{2}  \left(\int_{\R^d}(1 + \|x\|^2)^{r + |\alpha|} |\partial^\beta \psi(x)|^2\, dx\right)^{\frac{1}{2}}
\end{align*}
The statement follows using  \eqref{integrability}, provided $r > \frac{d + 1}{2}$ and $|\alpha| + |\beta| + r \leq 2p$.
\end{proof}

Recall the Fourier transform on $\Sc(\R^d)$ (see \cite{ThangaveluBk}). For $\phi \in \Sc(\R^d)$, define $\hat\phi: \R^d \to \mathbb{C}$
\[\hat \phi (x) := \left(\frac{1}{2\pi}\right)^{\frac{d}{2}}\ \int_{\R^d} e^{-i x.y} \phi(y)\, dy\]
where $i$ denotes the complex number $\sqrt{-1}$. The Fourier transform preserves $\Sc(\R^d)$ and is extended by duality to $\Sc^\prime(\R^d)$. The inverse Fourier transform $\check \phi$ is defined in a similar manner.

Unless stated otherwise, for $\phi_1, \phi_2 \in \Sc^\prime(\R^d)$, we set $\psi_1 = \check \phi_1$ and $\psi_2 = \check \phi_2$.

\begin{theorem}\label{fourier-convolution}
\begin{enumerate}[label=(\roman*)]
    \item (\cite{ThangaveluBk}) The space of Hermite-Sobolev spaces $\Sc_p(\R^d)$ are invariant under Fourier transform. Moreover, 
    \[C_{1, p, d} \|\phi\|_p \leq \|\hat \phi\|_p \leq C_{2, p, d} \|\phi\|_p, \forall \phi \in \Sc_p(\R^d),\]
for some positive constants $C_{1, p, d}$ and $C_{2, p, d}$ depending only on $p$ and $d$.

    \item For any $\phi_1, \phi_2 \in \Sc_p(\R^d)$, we have $\phi_1 \phi_2 = \hat \psi_1 \hat \psi_2 = \widehat{\psi_1\ast\psi_2}$.

    \item Given $\phi_1, \phi_2 \in \Sc_p(\R^d)$, $\phi_1 \phi_2 \in \Sc_p(\R^d)$ if and only if $\psi_1\ast\psi_2 \in \Sc_p(\R^d)$.
\end{enumerate}

\end{theorem}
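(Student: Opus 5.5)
Part (i) is cited from the literature, so I would only indicate how it fits the framework. The cleanest route is through the Hermite expansion. Each Hermite function is an eigenfunction of the Fourier transform, $\hat h_n = (-i)^{|n|} h_n$. Hence
\[
\langle \hat\phi, h_n\rangle = (-i)^{|n|}\langle \phi, h_n\rangle \quad \text{for } \phi\in\Sc(\R^d),
\]
by Plancherel. It follows that $\oldnorm[p]{\hat\phi} = \oldnorm[p]{\phi}$ for every $p$, so the Fourier transform is an isometry for the norms $\oldnorm[p]{\cdot}$. Proposition \ref{norm-equivalence} then turns this into the two-sided estimate for $\|\cdot\|_p$, with constants depending only on $p$ and $d$. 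Finally, density of $\Sc(\R^d)$ in $\Sc_p(\R^d)$ extends both the invariance and the estimate to all of $\Sc_p(\R^d)$.

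For (ii), take $\phi_1,\phi_2\in\Sc_p(\R^d)\subset\mathcal L^2(\R^d)$. Then $\psi_j=\check\phi_j$ lies in $\Sc_p(\R^d)$ by (i), and $\hat\psi_j=\phi_j$ by Fourier inversion on $\Sc'$. This gives the first equality $\phi_1\phi_2=\hat\psi_1\hat\psi_2$ immediately.

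For the second equality, first assume $p$ is large enough that $\psi_j\in\mathcal L^1$. Lemma \ref{L1-bound-by-Sp} with $\alpha=\beta=0$ gives this once $2p>\frac{d+1}{2}$. In that regime $\psi_1\ast\psi_2\in\mathcal L^1$, and the classical convolution theorem gives $\widehat{\psi_1\ast\psi_2}=(2\pi)^{d/2}\hat\psi_1\hat\psi_2$. With the paper's normalization this means the identity holds up to the constant $(2\pi)^{d/2}$, which I would either absorb into the convention or carry along; it does not affect (iii).

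For general $p\ge 0$, $\psi_j$ is only in $\mathcal L^2$, and $\psi_1\ast\psi_2$ is then a bounded continuous function. I would argue by density. Approximate $\psi_j$ in $\mathcal L^2$ by Schwartz functions $\psi_j^{(k)}$, for which the identity is classical. Then $\psi_1^{(k)}\ast\psi_2^{(k)}\to\psi_1\ast\psi_2$ uniformly, by Young's inequality $\|f\ast g\|_\infty\le\|f\|_2\|g\|_2$, and hence in $\Sc'$. Likewise $\hat\psi_1^{(k)}\hat\psi_2^{(k)}\to\hat\psi_1\hat\psi_2$ in $\mathcal L^1$, since the Fourier transform is an $\mathcal L^2$ isometry and the product of $\mathcal L^2$ functions lies in $\mathcal L^1$. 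Continuity of the Fourier transform on $\Sc'$ then passes the identity to the limit.

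Part (iii) follows from (i) and (ii). If $\psi_1\ast\psi_2\in\Sc_p$, then by (i) its Fourier transform, which equals $\phi_1\phi_2$ up to the constant, lies in $\Sc_p$. Conversely, if $\phi_1\phi_2\in\Sc_p$, then $\psi_1\ast\psi_2$ is the inverse Fourier transform of an element of $\Sc_p$. The inverse transform also preserves $\Sc_p$ by (i), since $\check\phi(x)=\hat\phi(-x)$ and reflection preserves both the $h_n$-expansion (up to signs) and the norms $\|\cdot\|_p$.

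The main obstacle is the density and limit argument in (ii) for small $p$. There one has to check that the convergence takes place in a topology, namely that of $\Sc'$, in which both the convolution and the Fourier transform behave continuously; everything else is bookkeeping with normalization constants.
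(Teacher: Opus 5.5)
Your proposal is correct and follows the route the paper implicitly relies on. The paper gives no argument beyond citing the literature for (i) and treating (ii)--(iii) as the classical convolution theorem combined with (i); your Hermite-eigenfunction argument for (i) and your $\mathcal{L}^2$ density argument for (ii) fill this in correctly. You are also right that, with the paper's normalization of the Fourier transform, the identity in (ii) actually reads $\widehat{\psi_1\ast\psi_2}=(2\pi)^{d/2}\hat\psi_1\hat\psi_2$, and this constant is harmless for (iii) and for the later estimates.
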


\begin{corollary}
Given $\phi_1, \phi_2 \in \Sc_p(\R^d)$, $\phi_1 \phi_2 \in \Sc_p(\R^d)$ if and only if $y \mapsto y^\alpha \partial^\beta (\psi_1\ast\psi_2)(y) \in \mathcal{L}^2(\R^d)$ for all multi-indices $\alpha, \beta$ satisfying $|\alpha| + |\beta| \leq 2p$.
\end{corollary}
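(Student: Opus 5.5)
The corollary follows by combining Theorem \ref{fourier-convolution}(iii) with the definition of the norm $\|\cdot\|_p$ and Proposition \ref{norm-equivalence}. By Theorem \ref{fourier-convolution}(iii), $\phi_1\phi_2 \in \Sc_p(\R^d)$ if and only if $\psi_1\ast\psi_2 \in \Sc_p(\R^d)$. So it suffices to show that a distribution $f := \psi_1\ast\psi_2$ lies in $\Sc_p(\R^d)$ if and only if $y^\alpha\partial^\beta f \in \mathcal{L}^2(\R^d)$ for all $|\alpha|+|\beta|\le 2p$. Here $\partial^\beta$ is the distributional derivative and multiplication by $y^\alpha$ is multiplication of a tempered distribution by a polynomial. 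These are well defined on $\Sc'(\R^d)$, so the condition makes sense once $f \in \Sc'(\R^d)$.

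For the forward direction, suppose $f \in \Sc_p$. Take $f_n \in \Sc$ with $f_n \to f$ in $\oldnorm[p]{\cdot}$. By Proposition \ref{norm-equivalence}, $(f_n)$ is Cauchy in $\|\cdot\|_p$. Hence for each $|\alpha|+|\beta|\le 2p$, the sequence $y^\alpha\partial^\beta f_n$ is Cauchy in $\mathcal{L}^2$ and converges to some $g_{\alpha\beta}\in\mathcal{L}^2$. Convergence in $\Sc_p$ implies convergence in $\Sc'$, and the operators $y^\alpha\partial^\beta$ are continuous on $\Sc'$. Therefore $g_{\alpha\beta} = y^\alpha\partial^\beta f$ as distributions, which gives the $\mathcal{L}^2$ membership.

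For the converse, suppose all $y^\alpha\partial^\beta f\in\mathcal{L}^2$. I need approximants in $\Sc$ that converge in every one of these seminorms. The cleanest route is the Hermite expansion. Since $\oldnorm[p]{\cdot}$ is defined through the coefficients $\langle f,h_n\rangle$, it suffices to bound $\sum_n (2|n|+d)^{2p}|\langle f,h_n\rangle|^2$. Write $(2|n|+d)^{p}\langle f,h_n\rangle = \langle f, H^p h_n\rangle = \langle H^p f, h_n\rangle$, where $H = -\Delta+\|y\|^2$ is the Hermite operator. Expanding $H^p f$ as a distribution gives a finite linear combination of terms $y^{\alpha}\partial^{\beta} f$ with $|\alpha|+|\beta|\le 2p$; commutators only lower the total order. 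Each such term is in $\mathcal{L}^2$ by hypothesis, so $H^pf\in\mathcal{L}^2$. Parseval then gives $\sum_n (2|n|+d)^{2p}|\langle f,h_n\rangle|^2 = \|H^pf\|_{\mathcal{L}^2}^2<\infty$. Hence the truncated Hermite sums of $f$ lie in $\Sc$ and converge to $f$ in $\oldnorm[p]{\cdot}$, so $f\in\Sc_p$.

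The main obstacle is the converse direction, specifically justifying the reordering of $H^p$ into monomials $y^\alpha\partial^\beta$ at the distributional level, and checking that every term produced satisfies $|\alpha|+|\beta|\le 2p$. I would handle this by induction on $p$ using the commutator $[\partial_j, y_j]=1$. The induction also reproduces the constants behind Proposition \ref{norm-equivalence}, consistent with the paper's claim that completing $\Sc$ under $\|\cdot\|_p$ yields $\Sc_p$.
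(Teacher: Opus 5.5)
Your proposal is correct and follows the route the paper takes implicitly: you reduce via Theorem \ref{fourier-convolution}(iii) to the statement $\psi_1\ast\psi_2\in\Sc_p(\R^d)$, and then read off membership from the norm $\|\cdot\|_p$ together with Proposition \ref{norm-equivalence}. The paper treats that last characterization as immediate from defining $\Sc_p$ as the completion under $\|\cdot\|_p$, whereas you actually justify it, which soundly fills in that step: a limiting argument in $\Sc^\prime$ gives one direction, and for the other you normal-order $\bH^p$ into terms $y^\alpha\partial^\beta$ with $|\alpha|+|\beta|\le 2p$ and apply Parseval.
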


Note that 
\begin{equation}\label{expansion}
y^\alpha \partial^\beta (\psi_1\ast\psi_2)(y) = \sum_{0 \leq \gamma \leq \alpha} \binom{\alpha}{\gamma} \int_{\R^d} (y - x)^{\gamma} \,\psi_1(y - x) \, x^{\alpha-\gamma} \partial^\beta \psi_2(x)\, dx,
\end{equation}
where $x^\gamma = x_1^{\gamma_1} \cdots x_d^{\gamma_d}$ and $\binom{\alpha}{\gamma} = \prod_{j = 1}^d \binom{\alpha_j}{\gamma_j}$. This observation leads to the next result.

\begin{lemma}\label{equivalent}
Given $\phi_1, \phi_2 \in \Sc_p(\R^d)$, $\phi_1 \phi_2 \in \Sc_p(\R^d)$ if and only if \[y \mapsto \int_{\R^d} (y - x)^{\gamma} \,\psi_1(y - x) \, x^{\alpha-\gamma} \partial^\beta \psi_2(x)\, dx \in \mathcal{L}^2(\R^d)\] for all multi-indices $\alpha, \beta, \gamma$ satisfying $0 \leq \gamma \leq \alpha, |\alpha| + |\beta| \leq 2p$.
\end{lemma}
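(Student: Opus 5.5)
Both directions follow from the preceding Corollary together with the expansion \eqref{expansion}. First I would check that \eqref{expansion} is valid for all $\psi_1,\psi_2$ that arise. For $\phi_1,\phi_2\in\Sc(\R^d)$ it is immediate: differentiation can be moved inside the convolution onto $\psi_2$, so $\partial^\beta(\psi_1\ast\psi_2)=\psi_1\ast\partial^\beta\psi_2$. Then one writes
\[
y^\alpha=\prod_j\bigl((y_j-x_j)+x_j\bigr)^{\alpha_j}
\]
and expands each factor by the binomial theorem. For general $\phi_i\in\Sc_p(\R^d)$, Theorem \ref{fourier-convolution}(i) gives $\psi_i\in\Sc_p(\R^d)$. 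Hence $|x|^{\gamma}\psi_1$ and $x^{\alpha-\gamma}\partial^\beta\psi_2$ lie in $\mathcal{L}^2$ whenever $|\alpha|+|\beta|\le 2p$. Each integral in \eqref{expansion} therefore converges absolutely for every $y$ by Cauchy--Schwarz, and the identity holds pointwise. It also holds in the sense of distributions, by approximating $\psi_i$ with Schwartz functions in $\|\cdot\|_p$ and using the uniform convergence that Cauchy--Schwarz provides.

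For the direction ``if'': assume each term
\[
y\mapsto\int_{\R^d}(y-x)^\gamma\psi_1(y-x)\,x^{\alpha-\gamma}\partial^\beta\psi_2(x)\,dx
\]
is in $\mathcal{L}^2$. Then the finite linear combination in \eqref{expansion} is in $\mathcal{L}^2$ for every $|\alpha|+|\beta|\le 2p$. The Corollary now gives $\phi_1\phi_2\in\Sc_p(\R^d)$.

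For the direction ``only if'': the natural route is induction on $|\alpha|$. We need to express each individual term as a combination of the quantities $y^{\alpha'}\partial^\beta(\psi_1\ast\psi_2)$, which the Corollary places in $\mathcal{L}^2$. Invert the binomial relation, writing $(y-x)^\gamma$ and moving weights around. For fixed $\beta$, the family $T_{\gamma,\delta}(y)=\int (y-x)^\gamma\psi_1(y-x)\,x^\delta\partial^\beta\psi_2(x)\,dx$ satisfies
\[
y^{\alpha}\partial^\beta(\psi_1\ast\psi_2)=\sum_{\gamma\le\alpha}\tbinom{\alpha}{\gamma}T_{\gamma,\alpha-\gamma}.
\]
This relation alone does not determine the individual $T_{\gamma,\alpha-\gamma}$.

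This is the step I expect to be the main obstacle. The ``only if'' direction is not a formal consequence of the expansion. My plan is to bypass it: I would show directly that each $T_{\gamma,\alpha-\gamma}$ is in $\mathcal{L}^2$ whenever $\psi_1,\psi_2\in\Sc_p$. Whether this can be done with the tools available up to this point is unclear. If it can, both sides of the lemma become true statements and the equivalence holds trivially. That, however, is essentially the content of the later Theorem \ref{main-result-1} and relies on Jensen/Young-type bounds together with Lemma \ref{L1-bound-by-Sp}. In the write-up I would therefore present the ``if'' direction as the substantive content, since only the sufficiency is used later, and treat the converse via that later estimate.
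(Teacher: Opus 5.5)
Your ``if'' direction is the paper's argument. The paper gives no proof of Lemma~\ref{equivalent} beyond the remark that \eqref{expansion} leads to it, and that expansion yields exactly the sufficiency you prove. Your Cauchy--Schwarz check, which makes \eqref{expansion} hold pointwise for $\psi_i\in\Sc_p$, is a detail the paper omits.

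You are also right that the expansion does not give the converse, and the paper offers nothing more for it. But your bypass fails for small $p$: you propose to show that every term is in $\mathcal{L}^2$ whenever $\psi_1,\psi_2\in\Sc_p$, and in fact the converse itself is false in the stated generality.

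Here is a counterexample with $d=5$, $p=1$. Take a real $\rho\in C_c^\infty(\R^5)$ supported in the unit ball, and pairwise disjoint balls $B(c_n,r_n)$ inside a fixed bounded set (possible since $\sum_n r_n<\infty$). Set
\[
\phi(x)=\sum_{n\geq 1} a_n\,\rho\Bigl(\frac{x-c_n}{r_n}\Bigr)e^{i\lambda_n x_1},\qquad a_n=n^{1/5},\quad r_n=n^{-2},\quad \lambda_n=n^{21/10}.
\]
The weights are harmless on a bounded set, so membership in $\Sc_1$ is just membership in $H^2(\R^5)$. Because the supports are disjoint, all norms split over $n$, and one computes:
\begin{itemize}
\item $\|\phi\|_{H^2}^2\lesssim\sum_n(\lambda_n^4a_n^2r_n^5+a_n^2r_n)<\infty$;
\item $\||\phi|^2\|_{H^2}^2\lesssim\sum_n a_n^4r_n<\infty$, since the phases cancel in $|\phi|^2$;
\item $\int|\partial_1\phi|^4\geq\|\rho\|_{\mathcal{L}^4}^4\sum_n\lambda_n^4a_n^4r_n^5=\|\rho\|_{\mathcal{L}^4}^4\sum_n n^{-4/5}=\infty$.
\end{itemize}
With $\phi_1=\phi$ and $\phi_2=\bar\phi$ we get $\phi_1\phi_2=|\phi|^2\in\Sc_1$. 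Now take $\alpha=2e_1$, $\beta=0$, $\gamma=e_1$. The term in the lemma is $(x_1\psi_1)\ast(x_1\psi_2)$, whose Fourier transform is a constant multiple of $\partial_1\phi_1\,\partial_1\phi_2=|\partial_1\phi|^2\notin\mathcal{L}^2$. By Plancherel this term is not in $\mathcal{L}^2$, even though the product lies in $\Sc_1$.

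Your bypass does establish the case $p>\frac{d+1}{2}$. There, Proposition~\ref{Jensen}, Lemma~\ref{L1-bound-by-Sp} and the case split in the proof of Theorem~\ref{main-result-2} put every term in $\mathcal{L}^2$ for arbitrary $\phi_1,\phi_2\in\Sc_p$. None of these uses the converse, so nothing is circular. The equivalence then holds only because both sides are always true.

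So for general $p$ the correct content of the lemma is the sufficiency statement, which is also the only direction used later, in Theorem~\ref{main-result-1}. Your decision to present the ``if'' part as the substance is right. The converse should be stated only where you can prove it, e.g.\ $p>\frac{d+1}{2}$, rather than treated as provable for all $p$.
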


Applying Jensen's inequality, we have the following upper bound for the $\mathcal{L}^2(\R^d)$ norm of the function $y \mapsto \int_{\R^d} (y - x)^{\gamma} \,\psi_1(y - x) \, x^{\alpha-\gamma} \partial^\beta \psi_2(x)\, dx$.

\begin{proposition}\label{Jensen}
For all multi-indices $\alpha, \beta, \gamma$ satisfying $0 \leq \gamma \leq \alpha, |\alpha| + |\beta| \leq 2p$, we have the following $\mathcal{L}^1$-$\mathcal{L}^2$ estimate:
\begin{align*}
  &\left\| \int_{\R^d} (\cdot - x)^{\gamma} \,\psi_1(\cdot - x) \, x^{\alpha-\gamma} \partial^\beta \psi_2(x)\, dx\right\|_{\mathcal{L}^2(\R^d)}\\
  &\leq \min\left\{\left\||\cdot|^{\alpha-\gamma} \partial^\beta \psi_2(\cdot)\right\|_{\mathcal{L}^1(\R^d)}\times  \left\||\cdot|^\gamma \psi_1(\cdot)\right\|_{\mathcal{L}^2(\R^d)},\right.\\ 
  &\qquad\left.\left\||\cdot|^{\alpha-\gamma} \partial^\beta \psi_2(\cdot)\right\|_{\mathcal{L}^2(\R^d)}\times  \left\||\cdot|^\gamma \psi_1(\cdot)\right\|_{\mathcal{L}^1(\R^d)}\right\}.
\end{align*}

\end{proposition}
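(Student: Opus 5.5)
This is Young's convolution inequality $\|f\ast g\|_{\mathcal{L}^2}\le \|f\|_{\mathcal{L}^1}\|g\|_{\mathcal{L}^2}$, applied twice: once with each factor playing the role of the $\mathcal{L}^1$ function. Since the paper derives it from Jensen's inequality, I would follow that route. Set
\[
f(x):=x^{\alpha-\gamma}\partial^\beta\psi_2(x), \qquad g(z):=z^\gamma\psi_1(z),
\]
so that the function under consideration is
\[
F(y)=\int_{\R^d} g(y-x)f(x)\,dx=(g\ast f)(y).
\]
Since $|z^\gamma|=|z|^\gamma$ with the notation $|z|^\gamma=|z_1|^{\gamma_1}\cdots|z_d|^{\gamma_d}$, the pointwise bound is
\[
|F(y)|\le\int_{\R^d} |g(y-x)|\,|f(x)|\,dx .
\]
The right-hand side involves only $|\cdot|^\gamma|\psi_1|$ and $|\cdot|^{\alpha-\gamma}|\partial^\beta\psi_2|$. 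If either of the relevant norms is infinite, the corresponding term in the minimum is infinite and there is nothing to prove. Both $\psi_i$ are at least tempered distributions here. I would first argue with $\psi_1,\psi_2\in\Sc(\R^d)$, or simply whenever all the integrals make sense as Lebesgue integrals, so that every step is an honest integral estimate.

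For the first bound, assume $\|f\|_{\mathcal{L}^1}=:M\in(0,\infty)$; the case $M=0$ is trivial. Then $\mu(dx):=|f(x)|\,dx/M$ is a probability measure. Jensen's inequality for the convex map $t\mapsto t^2$ gives
\[
|F(y)|^2\le M^2\Big(\int_{\R^d} |g(y-x)|\,\mu(dx)\Big)^2\le M^2\int_{\R^d} |g(y-x)|^2\,\mu(dx)=M\int_{\R^d} |g(y-x)|^2|f(x)|\,dx .
\]
Integrating in $y$ and applying Tonelli's theorem (everything is nonnegative) yields
\[
\|F\|_{\mathcal{L}^2}^2\le M\int_{\R^d}|f(x)|\int_{\R^d}|g(y-x)|^2\,dy\,dx=M^2\|g\|_{\mathcal{L}^2}^2 ,
\]
using translation invariance of Lebesgue measure. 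This is the first term in the minimum.

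For the second bound, change variables $x\mapsto y-x$ so that $F(y)=\int_{\R^d} g(x)f(y-x)\,dx$. Then repeat the argument with the probability measure $|g(x)|\,dx/\|g\|_{\mathcal{L}^1}$, which gives
\[
\|F\|_{\mathcal{L}^2}\le\|g\|_{\mathcal{L}^1}\|f\|_{\mathcal{L}^2}.
\]
Since both bounds hold, $\|F\|_{\mathcal{L}^2}$ is at most their minimum.

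The only mildly delicate point is measurability and the a.e.\ definedness of $F$. The first term is finite exactly when $f\in\mathcal{L}^1$ and $g\in\mathcal{L}^2$. Tonelli applied to the bound above shows that $\int|g(y-x)||f(x)|\,dx<\infty$ for a.e.\ $y$, so $F$ is defined a.e.\ and is measurable by Fubini; the second case is symmetric. The hypothesis $|\alpha|+|\beta|\le 2p$ plays no role in this estimate. It only matters later, when these norms are bounded by $\|\psi_i\|_p$ via Lemma \ref{L1-bound-by-Sp}.
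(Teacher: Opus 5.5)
Your proposal is correct and follows essentially the same route as the paper. Both use Jensen's inequality with the $\mathcal{L}^1$ factor ($|\cdot|^{\alpha-\gamma}|\partial^\beta\psi_2|$, respectively $|\cdot|^\gamma|\psi_1|$) as the weight, then Fubini--Tonelli and translation invariance, and exchange the roles for the second bound. Your version is slightly more careful: you pass to absolute values and normalize to a probability measure before applying Jensen, which the paper's displayed chain leaves implicit for the complex-valued $\psi_i$.
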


\begin{proof}
Jensen's inequality implies the following inequality
\[ \left( \int_{\R^d} f(x)\times g(x)\, dx \right)^2 \leq \|g\|_{\mathcal{L}^1(\R^d)} \int_{\R^d} (f(x))^2 \, |g(x)|\, dx.\]
Now,
\begin{align*}
  &\left\| \int_{\R^d} (\cdot - x)^{\gamma} \,\psi_1(\cdot - x) \, x^{\alpha-\gamma} \partial^\beta \psi_2(x)\, dx\right\|_{\mathcal{L}^2(\R^d)}^2\\
  &= \int_{\R^d} \left[\int_{\R^d} (y - x)^{\gamma} \,\psi_1(y - x) \, x^{\alpha-\gamma} \partial^\beta \psi_2(x)\, dx\right]^2 dy  \\
  &\leq \left\||\cdot|^{\alpha-\gamma} \partial^\beta \psi_2(\cdot)\right\|_{\mathcal{L}^1(\R^d)} \int_{\R^d} \int_{\R^d} \left[(y - x)^{\gamma} \,\psi_1(y - x)\right]^2 x^{\alpha-\gamma} \partial^\beta \psi_2(x)\, dx dy\\
  &= \left\||\cdot|^{\alpha-\gamma} \partial^\beta \psi_2(\cdot)\right\|_{\mathcal{L}^1(\R^d)} \int_{\R^d} \left[\int_{\R^d} \left[(y - x)^{\gamma} \,\psi_1(y - x)\right]^2\, dy \right] x^{\alpha-\gamma} \partial^\beta \psi_2(x)\, dx\\
  &= \left\||\cdot|^{\alpha-\gamma} \partial^\beta \psi_2(\cdot)\right\|_{\mathcal{L}^1(\R^d)}^2\  \left\||\cdot|^\gamma \psi_1(\cdot)\right\|_{\mathcal{L}^2(\R^d)}^2.
\end{align*}
This establishes one of the upper bounds. The other one follows by exchanging the roles of $\mathcal{L}^1(\R^d)$ and $\mathcal{L}^2(\R^d)$ functions in the first estimate.
\end{proof}

Combining Lemma \ref{L1-bound-by-Sp}, Lemma \ref{equivalent}
and Proposition \ref{Jensen}, we have the next result.

\begin{theorem}\label{main-result-1}
Fix $\phi_1, \phi_2 \in \Sc_p(\R^d)$. 
\begin{enumerate}[label=(\alph*)]
    \item For $\phi_1 \phi_2 \in \Sc_p(\R^d)$ to hold, it is sufficient that for all multi-indices $\alpha, \beta, \gamma$ satisfying $0 \leq \gamma \leq \alpha, |\alpha| + |\beta| \leq 2p$, one of the following conditions is true:
\begin{enumerate}[label=(\roman*)]
    \item $\left\||\cdot|^{\alpha-\gamma} \partial^\beta \psi_2(\cdot)\right\|_{\mathcal{L}^1(\R^d)} \leq C \|\phi_2\|_p$ for some positive constant $C = C_{p, d}$

    \item $\left\||\cdot|^\gamma \psi_1(\cdot)\right\|_{\mathcal{L}^1(\R^d)} \leq C \|\phi_1\|_p$ for some positive constant $C = C_{p, d}$
\end{enumerate}

\item The sufficient condition in (a) is implied by the following condition: for all multi-indices $\alpha, \beta, \gamma$ satisfying $0 \leq \gamma \leq \alpha, |\alpha| + |\beta| \leq 2p$, there exists $r = r(\alpha, \beta, \gamma) > \frac{d+1}{2}$ such that one of the following conditions is true,
\begin{enumerate}[label=(\roman*)]
    \item $|\alpha - \gamma| + |\beta| + r \leq 2p$.

    \item $|\gamma| + r \leq 2p$.

\end{enumerate}
\end{enumerate}
\end{theorem}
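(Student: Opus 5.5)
We prove (a) and (b) in order. Part (a) combines Lemma \ref{equivalent} with Proposition \ref{Jensen}; part (b) reduces to (a) through Lemma \ref{L1-bound-by-Sp}.

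\textbf{Part (a).} By Lemma \ref{equivalent}, it suffices to show that for every admissible triple $(\alpha,\beta,\gamma)$ the function
\[y\mapsto \int_{\R^d}(y-x)^\gamma\psi_1(y-x)\,x^{\alpha-\gamma}\partial^\beta\psi_2(x)\,dx\]
lies in $\mathcal{L}^2(\R^d)$. Fix such a triple. Proposition \ref{Jensen} bounds its $\mathcal{L}^2$ norm by the minimum of two products. Each product consists of one $\mathcal{L}^1$ factor and one $\mathcal{L}^2$ factor.

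The $\mathcal{L}^2$ factors are always finite. Indeed, $|\gamma|\le|\alpha|\le 2p$ and $|\alpha-\gamma|+|\beta|\le 2p$. Since $\psi_i=\check\phi_i$ lies in $\Sc_p$ by Theorem \ref{fourier-convolution}(i), both $\||\cdot|^\gamma\psi_1\|_{\mathcal{L}^2}$ and $\||\cdot|^{\alpha-\gamma}\partial^\beta\psi_2\|_{\mathcal{L}^2}$ are bounded by constants times $\|\psi_1\|_p$ and $\|\psi_2\|_p$. Here one uses $|y^{\alpha}|=|y|^\alpha$, so these are exactly terms appearing in the definition of $\|\cdot\|_p$. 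By Theorem \ref{fourier-convolution}(i) again, $\|\psi_i\|_p\le C\|\phi_i\|_p$.

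Now the two cases of the hypothesis finish the argument:
\begin{itemize}
\item If condition (i) holds, take the first term of the minimum. It is at most $C\|\phi_2\|_p\cdot C'\|\phi_1\|_p<\infty$.
\item If condition (ii) holds, take the second term of the minimum. It is bounded in the same way.
\end{itemize}
Since $\phi_1,\phi_2\in\Sc_p$ a priori, the bound is finite in either case.

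\textbf{Density issue.} Strictly, Proposition \ref{Jensen} and the identity \eqref{expansion} are stated for Schwartz functions. So we would first prove the estimate for $\phi_i\in\Sc$, which gives a bound $\|\phi_1\phi_2\|_p\le C\|\phi_1\|_p\|\phi_2\|_p$. We then pass to general $\phi_i\in\Sc_p$ by density. Products converge in $\mathcal{L}^2$, hence in $\Sc'$, and the bound lets us identify the limit as an element of $\Sc_p$. This approximation is the only mildly delicate step.

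\textbf{Part (b).} Here we invoke Lemma \ref{L1-bound-by-Sp} with the given $r>\frac{d+1}{2}$.
\begin{itemize}
\item If $|\alpha-\gamma|+|\beta|+r\le 2p$, the lemma applied to $\psi_2$, with multi-indices $\alpha-\gamma$ and $\beta$, gives $\||\cdot|^{\alpha-\gamma}\partial^\beta\psi_2\|_{\mathcal{L}^1}\le C_{d,r}\|\psi_2\|_p\le C\|\phi_2\|_p$. This is condition (a)(i).
\item If $|\gamma|+r\le 2p$, the lemma applied to $\psi_1$, with multi-indices $\gamma$ and $0$, yields condition (a)(ii).
\end{itemize}
The constants depend only on $r$, $p$ and $d$. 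Since there are finitely many triples $(\alpha,\beta,\gamma)$, we may take the maximum over them to obtain a single constant $C_{p,d}$.
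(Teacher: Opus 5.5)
Your proposal is correct and follows the paper's proof essentially step for step. You combine Lemma \ref{equivalent} with Proposition \ref{Jensen}, control the $\mathcal{L}^2$ factors through the definition of $\|\cdot\|_p$ and Fourier invariance, and for part (b) apply Lemma \ref{L1-bound-by-Sp} to $(\alpha-\gamma,\beta)$ or to $(\gamma,0)$. Your density paragraph is extra rigor the paper omits, and it works: to get convergence of the products in $\Sc'$, the $\mathcal{L}^1$ convergence that Cauchy--Schwarz always gives is enough. Your $\mathcal{L}^2$ claim needs an $\mathcal{L}^\infty$ bound on $\Sc_p$, which does hold under the hypotheses of (b).
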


\begin{proof}
By Lemma \ref{equivalent}, $\phi_1 \phi_2 \in \Sc_p(\R^d)$ if and only if \[y \mapsto \int_{\R^d} (y - x)^\alpha \,\psi_1(y - x) \, x^\gamma \partial^\beta \psi_2(x)\, dx \in \mathcal{L}^2(\R^d)\] for all multi-indices $\alpha, \beta, \gamma$ satisfying $0 \leq \gamma \leq \alpha, |\alpha| + |\beta| \leq 2p$. To ensure this for all such combinations of $(\alpha, \beta, \gamma)$, we show that one of the upper bounds of $\mathcal{L}^2(\R^d)$ norm appearing on the right hand side of Proposition \ref{Jensen} is further upper bounded by $\|\phi_1\|_p \|\phi_2\|_p$.

By the definition of $\|\cdot\|_p$ and Theorem \ref{fourier-convolution}, we have 
\[\left\||\cdot|^\gamma \psi_1(\cdot)\right\|_{\mathcal{L}^2(\R^d)} \leq  \|\phi_1\|_p, \qquad \left\||\cdot|^{\alpha-\gamma} \partial^\beta \psi_2(\cdot)\right\|_{\mathcal{L}^2(\R^d)} \leq C \|\phi_2\|_p\]
Then $\phi_1 \phi_2 \in \Sc_p(\R^d)$ holds, provided the sufficient condition in part (a) of the statement holds.

To establish the result in part (b), note that by Lemma \ref{L1-bound-by-Sp},
\[\left\||\cdot|^{\alpha-\gamma} \partial^\beta \psi_2(\cdot)\right\|_{\mathcal{L}^1(\R^d)} \leq C \|\phi_2\|_p,\]
for some positive constant $C = C_{p, d, r}$ provided there exists $r = r(\alpha, \beta, \gamma) > \frac{d+1}{2}$ such that $|\alpha - \gamma| + |\beta| + r \leq 2p$. Similarly,
\[ \left\||\cdot|^\gamma \psi_1(\cdot)\right\|_{\mathcal{L}^1(\R^d)} \leq C \|\phi_1\|_p,\]
for some positive constant $C = C_{p, d, r}$ provided there exists $r = r(\alpha, \beta, \gamma) > \frac{d+1}{2}$ such that $|\gamma| + r \leq 2p$. The result follows.
\end{proof}

\begin{theorem}\label{main-result-2}
Let $p$ be any integer with $p > \frac{d+1}{2}$. Then for all $\phi_1, \phi_2 \in \Sc_p(\R^d)$,
\begin{equation}\label{product-continuity}
\|\phi_1 \phi_2\|_p \leq C \|\phi_1\|_p \|\phi_2\|_p,    
\end{equation}
for some positive constant $C = C_{p, d}$, i.e. $\Sc_p(\R^d)$ is an algebra.
\end{theorem}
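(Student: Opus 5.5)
We will verify the condition in Theorem \ref{main-result-1}(b) for every admissible triple $(\alpha,\beta,\gamma)$. The norm bound \eqref{product-continuity} then follows by tracking constants through the chain of estimates, rather than from membership alone.

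\textbf{Choice of $r$.} Since $p$ is an integer with $p > \frac{d+1}{2}$, the natural choice is $r = p$. This satisfies $r > \frac{d+1}{2}$ and does not depend on $(\alpha,\beta,\gamma)$.

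\textbf{Verifying (b).} Let $0 \le \gamma \le \alpha$ with $|\alpha| + |\beta| \le 2p$. Then
\[(|\alpha-\gamma| + |\beta|) + |\gamma| = |\alpha| + |\beta| \le 2p,\]
so at least one of the two nonnegative integers $|\alpha-\gamma|+|\beta|$ and $|\gamma|$ is at most $p$.
\begin{itemize}
\item If $|\alpha-\gamma| + |\beta| \le p$, then $|\alpha-\gamma|+|\beta| + r \le 2p$, which is condition (i).
\item Otherwise $|\gamma| \le p$, so $|\gamma| + r \le 2p$, which is condition (ii).
\end{itemize}
Hence Theorem \ref{main-result-1} gives $\phi_1\phi_2 \in \Sc_p(\R^d)$.

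\textbf{Quantitative bound.} Take $\phi_1,\phi_2 \in \Sc(\R^d)$, so that all manipulations (Fubini, the expansion \eqref{expansion}) are justified. By Corollary and \eqref{expansion}, $\|\psi_1\ast\psi_2\|_p$ is bounded by a finite sum, with binomial coefficients depending only on $p,d$, of the $\mathcal{L}^2$ norms estimated in Proposition \ref{Jensen}. For each term we choose the factor placed in $\mathcal{L}^1$ according to the case above.
\begin{itemize}
\item In case (i), Lemma \ref{L1-bound-by-Sp} bounds the $\mathcal{L}^1$ factor by $C\|\psi_2\|_p$, and the $\mathcal{L}^2$ factor is at most $\|\psi_1\|_p$ by the definition of $\|\cdot\|_p$.
\item In case (ii) the roles of $\psi_1$ and $\psi_2$ are swapped.
\end{itemize}
Each term is therefore at most $C\|\psi_1\|_p\|\psi_2\|_p$. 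Theorem \ref{fourier-convolution}(i) (applied to the inverse transform) gives $\|\psi_j\|_p \le C\|\phi_j\|_p$. Theorem \ref{fourier-convolution}(ii), with part (i) again, gives $\|\phi_1\phi_2\|_p = \|\widehat{\psi_1\ast\psi_2}\|_p \le C\|\psi_1\ast\psi_2\|_p$. Combining these yields \eqref{product-continuity} on $\Sc(\R^d)$.

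\textbf{Extension to $\Sc_p(\R^d)$.} Approximate $\phi_j \in \Sc_p(\R^d)$ by $\phi_j^{(n)} \in \Sc(\R^d)$. Bilinearity and the bound on $\Sc$ show that $\phi_1^{(n)}\phi_2^{(n)}$ is Cauchy in $\Sc_p$. Since $p>\frac{d+1}{2}$, Lemma \ref{L1-bound-by-Sp} with $\alpha=\beta=0$ shows $\psi_j^{(n)}\to\psi_j$ in $\mathcal{L}^1$. Hence $\phi_j^{(n)}\to\phi_j$ uniformly, so $\phi_1^{(n)}\phi_2^{(n)} \to \phi_1\phi_2$ pointwise. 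The $\Sc_p$ limit, which also converges in $\mathcal{L}^2$, must therefore equal $\phi_1\phi_2$.

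\textbf{Main obstacle.} The delicate step is the bookkeeping in the quantitative bound. One must check that the $\mathcal{L}^2$ factors involve at most $2p$ total weights and derivatives, so that they are controlled by $\|\cdot\|_p$. For $|\cdot|^\gamma\psi_1$ and $|\cdot|^{\alpha-\gamma}\partial^\beta\psi_2$ this holds because $|\alpha|+|\beta|\le 2p$. A second point is that $|x|^\gamma$ versus $x^\gamma$ causes no issue in $\mathcal{L}^2$.
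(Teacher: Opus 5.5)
Your proof is correct and follows the paper's argument: you verify condition (b) of Theorem~\ref{main-result-1} by noting that $|\alpha-\gamma|+|\beta|$ and $|\gamma|$ sum to at most $2p$, so one of (i) or (ii) must hold. The only differences are that you take $r=p$ instead of the paper's $r=\frac{1}{2}\left(p+\frac{d+1}{2}\right)$ (any $r$ with $\frac{d+1}{2}<r\leq p$ works), and that you spell out the constant-tracking and the density extension from $\Sc(\R^d)$ to $\Sc_p(\R^d)$, which the paper leaves implicit.
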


\begin{proof}
Consider all combinations of $(\alpha, \beta, \gamma)$ satisfying $0 \leq \gamma \leq \alpha, |\alpha| + |\beta| \leq 2p$. Note that $0 \leq |\gamma| \leq |\alpha|$ and $|\alpha - \gamma| = |\alpha| - |\gamma|$.

Set $r := \frac{1}{2}(p + \frac{d+1}{2})$. Then, $\frac{d+1}{2} < r < p $. We show that for this choice of $r$, one of $(i)$ or $(ii)$ in Theorem \ref{main-result-1} holds. Consequently, by \eqref{expansion}, \eqref{product-continuity} follows.

Whenever $0 \leq |\gamma| \leq 2p - r$, we have $|\gamma| + r \leq 2p$, satisfying $(ii)$. Therefore $(ii)$ holds for all combinations of $(\alpha, \beta, \gamma)$ with $0 \leq |\gamma| \leq 2p - r$.

Now, consider the case $|\gamma| > 2p - r$. Then 
\[|\alpha- \gamma| + |\beta| + r = (|\alpha| - |\gamma|) + |\beta| + r = (|\alpha| + |\beta|) - |\gamma| + r < 2p - (2p - r) + r = 2r < 2p,\]
satisfying $(i)$.

This completes the proof.
\end{proof}

The following corollary allows us to consider multiplication of tempered distributions in $\Sc_{-p}(\R^d)$ by functions in $\Sc_p(\R^d)$, for an appropriate $p$.
\begin{corollary}\label{main-result-3}
Let $p$ be any integer with $p > \frac{d+1}{2}$. Let $\eta \in \Sc_{-p}(\R^d)$ and $\psi \in \Sc_{p}(\R^d)$. Then, $\psi \eta , \psi\ast\eta \in \Sc_{-p}(\R^d)$ with
\[\| \psi \eta \|_{-p} \leq C_{p, d} \|\eta\|_{-p}\ \|\psi\|_p, \quad \|\psi \ast \eta \|_{-p} \leq C_{p, d}^\prime \|\eta\|_{-p}\ \|\psi\|_p,\]
for positive constants $C_{p, d}$ and $C_{p, d}^\prime$.
\end{corollary}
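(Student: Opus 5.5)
The plan is a duality argument built on Theorem \ref{main-result-2}, with Theorem \ref{fourier-convolution} used to pass from products to convolutions. For the product, I first define $\psi\eta$ as a tempered distribution by $\langle \psi\eta, \phi\rangle := \langle \eta, \psi\phi\rangle$ for $\phi \in \Sc(\R^d)$. This is consistent with the classical definition when $\psi \in \Sc(\R^d)$; I will use the bilinear pairing, inserting complex conjugates if the sesquilinear convention is preferred. Since $\Sc_{-p}$ is isometrically the dual of $\Sc_p$, we get
\[
|\langle \psi\eta, \phi\rangle| \leq \oldnorm[-p]{\eta}\,\oldnorm[p]{\psi\phi}.
\]
By Proposition \ref{norm-equivalence} and Theorem \ref{main-result-2},
\[
\oldnorm[p]{\psi\phi} \leq C\|\psi\phi\|_p \leq C\|\psi\|_p\|\phi\|_p \leq C'\|\psi\|_p\oldnorm[p]{\phi}.
\]
Hence $\phi\mapsto\langle\psi\eta,\phi\rangle$ extends to a bounded functional on $\Sc_p$. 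Its norm is at most $C\|\eta\|_{-p}\|\psi\|_p$, so $\psi\eta\in\Sc_{-p}$ with the claimed bound. Here $\|\cdot\|_{-p}$ is interpreted as $\oldnorm[-p]{\cdot}$, or any equivalent norm, since the integer-indexed norms were only set up for $p\ge 0$. A small point to check is that $\psi\phi$ need not be Schwartz when $\psi$ is only in $\Sc_p$. The pairing $\langle\eta,\psi\phi\rangle$ is still meaningful because $\psi\phi\in\Sc_p$ by Theorem \ref{main-result-2}, and it is the limit of $\langle\eta,\psi_n\phi\rangle$ for Schwartz $\psi_n\to\psi$ in $\Sc_p$. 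This also shows that the definition agrees with the continuous extension from $\psi\in\Sc$.

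For the convolution, I will use Fourier invariance. Theorem \ref{fourier-convolution}(i) gives $\|\hat\phi\|_p\asymp\|\phi\|_p$ on $\Sc_p$, and by duality the same holds on $\Sc_{-p}$: the adjoint of the Fourier transform is again a Fourier-type transform, namely the transform itself under the bilinear pairing, and $\Sc_p$ is also invariant under inversion $x\mapsto -x$. I will define $\psi\ast\eta$ through the identity $\widehat{\psi\ast\eta}=(2\pi)^{d/2}\hat\psi\,\hat\eta$, which matches Theorem \ref{fourier-convolution}(ii) up to its normalization. Equivalently, I can set $\langle\psi\ast\eta,\phi\rangle=\langle\eta,\tilde\psi\ast\phi\rangle$ with $\tilde\psi(x)=\psi(-x)$, and check that $\tilde\psi\ast\phi\in\Sc_p$ with $\|\tilde\psi\ast\phi\|_p\le C\|\psi\|_p\|\phi\|_p$. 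This last bound follows from Theorem \ref{main-result-2} applied to $\hat{\tilde\psi}\,\hat\phi$, together with Theorem \ref{fourier-convolution}(i). Either route then gives
\[
\|\psi\ast\eta\|_{-p}\le C\|\hat\psi\hat\eta\|_{-p}\le C\|\hat\psi\|_p\|\hat\eta\|_{-p}\le C'\|\psi\|_p\|\eta\|_{-p}.
\]

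I expect the main obstacle to be bookkeeping rather than any real estimate. Two points need care. First, the Fourier transform must be bounded on $\Sc_{-p}$; this follows by duality from Theorem \ref{fourier-convolution}(i), or directly because the Hermite functions are eigenfunctions of the Fourier transform with unimodular eigenvalues, so $\oldnorm[-p]{\cdot}$ is in fact preserved. Second, the definitions of $\psi\eta$ and $\psi\ast\eta$ for non-Schwartz $\psi$ must be consistent. I will handle this by proving the bounds for $\psi\in\Sc(\R^d)$, where both operations are classical, and then extending by density of $\Sc$ in $\Sc_p$ using the uniform bounds just obtained.
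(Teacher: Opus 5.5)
Your proposal is correct and follows essentially the same route as the paper: the product bound comes from the duality identity $\langle \psi\eta,\phi\rangle=\langle \eta,\psi\phi\rangle$ combined with Theorem \ref{main-result-2}, and the convolution bound is transferred from the product bound via the Fourier transform as in Theorem \ref{fourier-convolution}. Your extra bookkeeping fills in details the paper leaves implicit rather than changing the argument: norm equivalence via Proposition \ref{norm-equivalence}, Fourier invariance of $\Sc_{-p}$ through the Hermite eigenfunctions, the $(2\pi)^{d/2}$ normalization in the convolution identity, and the density argument for non-Schwartz $\psi$.
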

\begin{proof}
 For any $\phi \in \Sc(\R^d)$, we have
\[|\left\langle \psi\eta, \phi \right\rangle| = |\left\langle \eta, \psi\phi \right\rangle|\leq \|\eta\|_{-p}\ \|\psi\phi\|_{p} \leq C_{p, d} \|\eta\|_{-p}\ \|\psi\|_p\ \|\phi\|_p,\]
where $C_{p, d}$ is as in Theorem \ref{main-result-2}. The first inequality follows by the identity
\[\| \psi \eta \|_{-p} = \sup_{\overset{\phi \in \Sc(\R^d),}{\|\phi\|_p = 1}} |\left\langle \psi\eta, \phi \right\rangle|\]
The proof of the second inequality in the statement follows from 
Theorem \ref{fourier-convolution}.
\end{proof}

Using the above corollary, we have the following uniform bound for the translation operators $\tau_x, x \in \R^d$ improving the bound of \cite[Theorem 2.1]{BR-ST-Heat}.
\begin{theorem}\label{translation-bound}
Let $p \in \R$. Then
\[\sup_{x \in \R^d} \|\tau_x\|_{\Sc_p \to \Sc_p} < \infty\]
\end{theorem}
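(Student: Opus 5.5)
The plan is to realise translation as convolution with a Dirac mass and feed this into Corollary \ref{main-result-3}. For $\psi\in\Sc(\R^d)$ and $x\in\R^d$ we have $\tau_x\psi=\psi\ast\delta_x$, where $\delta_x\in\Sc^\prime(\R^d)$ is the point evaluation at $x$. So the first step is to show that $\delta_x\in\Sc_{-q}(\R^d)$ with a norm bound uniform in $x$, for $q$ large.

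By definition of the norm,
\[\oldnorm[-q]{\delta_x}^2=\sum_{n\in\Z^d_+}(2|n|+d)^{-2q}|h_n(x)|^2 .\]
I would use the classical uniform bound $\sup_{n,x}|h_n(x)|\leq 1$ (Cramér's inequality, applied coordinatewise to the one-dimensional Hermite functions). The sum is then dominated by $\sum_n(2|n|+d)^{-2q}$. This is finite once $2q>d$, since the number of $n$ with $|n|=k$ grows like $k^{d-1}$. Hence $\sup_x\oldnorm[-q]{\delta_x}<\infty$ for every integer $q>\frac{d+1}{2}$. By Proposition \ref{norm-equivalence} and duality, the same holds for $\|\delta_x\|_{-q}$.

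Fix an integer $q>\frac{d+1}{2}$ and apply Corollary \ref{main-result-3} with $\eta=\delta_x$. This gives
\[\|\tau_x\psi\|_{-q}\leq C'_{q,d}\,\|\delta_x\|_{-q}\,\|\psi\|_q ,\]
which is uniform in $x$, and extends by density to all of $\Sc_q$. The remaining work is to pass from this mixed estimate to the $\Sc_p\to\Sc_p$ statement for arbitrary real $p$. I would do this in three moves.

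\begin{enumerate}[label=(\roman*)]
\item Use the adjoint relation $\inpr{\tau_x\phi}{\psi}=\inpr{\phi}{\tau_{-x}\psi}$. Bounds for $\tau_{-x}$ on one space then transfer to bounds for $\tau_x$ on the dual space, with the same constant.
\item Combine the estimate $\Sc_q\to\Sc_{-q}$ with the trivial isometry $\tau_x$ on $\Sc_0=\mathcal{L}^2$.
\item Interpolate in the Hilbert scale $\{\Sc_s\}$, which is diagonalised by the Hermite basis, to obtain bounds on $\Sc_p$ for intermediate $p$.
\end{enumerate}

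The main obstacle is the upgrade from the $\Sc_q\to\Sc_{-q}$ bound to an $\Sc_p\to\Sc_p$ bound. Interpolating between $\Sc_q\to\Sc_{-q}$ and $\Sc_0\to\Sc_0$ only yields maps $\Sc_s\to\Sc_{-s}$, which lose regularity. A genuinely same-space bound would need an estimate whose target is $\Sc_q$ itself. Here I would first try to write $\tau_x\psi$ in terms of $\psi\ast\delta_x$ composed with the Fourier transform, using Theorem \ref{fourier-convolution}(i). The aim would be to get a product $\hat\psi\cdot e^{-ix\cdot(\,\cdot\,)}$ to which Theorem \ref{main-result-2} applies.

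However, I expect this step to fail. The factor $e^{-ix\cdot(\,\cdot\,)}$ is not in $\Sc_q$. Moreover, testing on $\psi=h_0$ shows that $\|\tau_x h_0\|_p$ grows like $\|x\|^{2p}$ for $p>0$. So this is exactly where the argument must be checked most carefully; a genuinely uniform bound is plausibly available only in the mixed form $\Sc_q\to\Sc_{-q}$ obtained above.
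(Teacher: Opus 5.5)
\textbf{The theorem is false, and so is the paper's proof.} Your proposal does not prove the statement, and no proof is possible. The obstacle you isolate is real: the statement fails for every $p\neq 0$. Your own test function settles this, so state it as a disproof rather than hedging.

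For $d=1$, $p=1$, write $u=y-x$. Then
\[\bH\tau_x h_0(y)=\pi^{-1/4}(x^2+2xu+1)e^{-u^2/2},\]
so $\oldnorm[1]{\tau_x h_0}^2=(x^2+1)^2+2x^2$, while $\oldnorm[1]{h_0}=1$.

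In general, $\tau_x h_0$ is a coherent state with
\[\langle \tau_x h_0,h_n\rangle=e^{-\|x\|^2/4}\prod_j \frac{(x_j/\sqrt2)^{n_j}}{\sqrt{n_j!}},\]
so that
\[\oldnorm[p]{\tau_x h_0}^2=\sum_{k\geq 0}e^{-\lambda}\frac{\lambda^k}{k!}\,(2k+d)^{2p},\qquad \lambda=\frac{\|x\|^2}{2}.\]
This is of order $\|x\|^{4p}$ as $\|x\|\to\infty$ for every $p>0$. For $p<0$, your adjoint relation (i) gives, with the Hermite norms, $\|\tau_x\|_{\Sc_p\to\Sc_p}=\|\tau_{-x}\|_{\Sc_{-p}\to\Sc_{-p}}$, so the supremum is infinite there too.

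Equivalently, by Theorem \ref{fourier-convolution}(i), translations correspond to modulations by $e^{-ix\cdot(\cdot)}$. Derivatives of a modulated function pick up powers of $x$, which is exactly the phenomenon you noticed. The most one can have is a bound growing polynomially in $\|x\|$, as in the result of \cite{BR-ST-Heat} that the paper quotes. The computation above shows the growth is at least $\|x\|^{2|p|}$.

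\textbf{Where the paper's proof goes wrong.} It fails at the same point you flagged. In Case II the paper applies Corollary \ref{main-result-3} with $\eta=\delta_x$ and records a bound on $\|\tau_x\phi\|_p$. That corollary only bounds $\|\phi\ast\delta_x\|_{-p}$, so what it actually yields is precisely your mixed estimate $\Sc_p\to\Sc_{-p}$. That estimate says nothing about translations, since for $p\geq 0$ one already has
\[\oldnorm[-p]{\tau_x\phi}\leq\oldnorm[0]{\tau_x\phi}=\oldnorm[0]{\phi}\leq\oldnorm[p]{\phi}.\]
Cases III--V all rest on Case II and inherit the error: the duality step, and the Three Lines interpolation, which needs a genuine $\Sc_q\to\Sc_q$ endpoint, as you observed.

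The correct conclusion of your analysis is that the uniform bound holds only for $p=0$. For $p\neq 0$, the uniform-in-$x$ statement survives only in the trivial mixed form.
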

\begin{proof}
We divide our proof into various cases involving values of $p$.

Case I ($p = 0$): Here $\Sc_0(\R^d) = \mathcal{L}^2(\R^d,dx)$ and $\|\tau_x \phi\|_0 = \|\phi\|_0$ for all $\phi \in \Sc_0(\R^d)$ and $x \in \R^d$. Therefore, $\|\tau_x\|_{\Sc_0 \to \Sc_0} = 1$ for all $x \in \R^d$ and the result follows.

Case II ($p$ is any integer greater than $\frac{d+1}{2}$): For any $\phi \in \Sc_p(\R^d)$, note that $\tau_x \phi = \phi \ast \delta_x$ and hence, by Corollary \ref{main-result-3},
\[\sup_{x \in \R^d} \|\tau_x \phi\|_p \leq C_{p, d}^\prime \|\phi\|_p \sup_{x \in \R^d} \|\delta_x\|_{-p}\]
The result follows from \cite[Theorem 4.1 a)]{BR-ST-forward}. 

Case III ($p = -q$ where $q$ is any integer greater than $\frac{d+1}{2}$):
For the second inequality, note that for any $\phi \in \Sc_{p}(\R^d)$,
\[\| \tau_x \phi \|_{p} = \sup_{\overset{\eta \in \Sc(\R^d),}{\|\eta\|_q = 1}} |\left\langle \tau_x \phi, \eta \right\rangle| = \sup_{\overset{\eta \in \Sc(\R^d),}{\|\eta\|_q = 1}} |\left\langle  \phi, \tau_{-x} \eta \right\rangle| \leq \|\phi\|_p \sup_{\overset{\eta \in \Sc(\R^d),}{\|\eta\|_q = 1}} \|\tau_{-x} \eta\|_q\]
The inequality then follows from Case II.

Case IV ($p$ is a positive real number not considered in Case II):
Recall the Hermite operator $\bH$ on $\Sc^\prime(\R^d)$ given by $\bH := |x|^2 - \bigtriangleup$, where $|x|^2$ denotes multiplication by $x_1^2 + x_2^2 + \cdots + x_d^2$ (see \cite{Ito-monograph, ThangaveluBk}). Also note that $\bH$ is an unbounded positive operator on $\mathcal{L}^2(\R^d,dx)$ with the Hermite functions $\{h_n: n\in\mathbb{Z}^d_+\}$ as eigenfunctions such that $\bH h_n = (2 |n| + d) h_n$ for all multi-indices $n\in\mathbb{Z}^d_+$. It is well-known (see \cite[Section 3]{BR-ST-Heat}) that $\bH^q \Sc_q(\R^d) = \Sc_0(\R^d) = \mathcal{L}^2(\R^d)$ for all $q \in \R$ with $\oldnorm[0]{\bH^q \phi} = \oldnorm[q]{\phi}$ for all $\phi \in \Sc_q(\R^d)$.

For the $p$ under consideration, we choose an integer $q > \max\{p, \frac{d+1}{2}\}$. Utilizing the Three Lines Lemma (see \cite{Stein-WeissBk}), we establish the required inequality for $p$, by interpolating between $0$ (from Case I) and $q$ (from Case II). This is an adaptation of the proof of \cite[Theorem 2.1]{BR-ST-Heat}. 

For fixed $\phi, \psi \in \Sc(\R^d)$ and for fixed $x \in \R$, consider the following complex valued function on complex variable $z$,
\[F(z) := \oldinpr[0]{\bH^{\bar z}\tau_x \bH^{-z} \phi}{\psi}\]
This $F$ is analytic in $0 < Re\ z < q$ and continuous in $0 \leq Re\ z \leq q$, where $Re\ z$ denotes the real part of $z$. Using Case I and Case II, and arguing as in \cite[Theorem 2.1]{BR-ST-Heat}, we have
\begin{align*}
    |F(iy)| &\leq \oldnorm[0]{\phi}\ \oldnorm[0]{\psi}\\
    |F(q + iy)| &\leq C_{q, d}\ \oldnorm[0]{\phi}\ \oldnorm[0]{\psi} 
\end{align*}
The Three Lines Lemma implies $|F(p)| \leq C_{q, d}^{\frac{p}{q}}\ \oldnorm[0]{\phi}\ \oldnorm[0]{\psi}$ and consequently, we have $\|\tau_x\|_{\Sc_p \to \Sc_p} \leq C_{q, d}^{\frac{p}{q}}$. As $C_{q, d}$ is independent of $x$, we have the inequality for the $p$ under consideration.

Case V ($p$ is a negative real number not considered in Case III) Here $p = -q$, with $q$ already considered in Case IV. The inequality for this value of $p$ follows from the duality argument employed in Case III applied to the inequality for $q$.
\end{proof}

\textbf{Acknowledgement:} The authors would like to thank S. Thangavelu for his comments on an earlier draft.

\bibliographystyle{plain}

\bibliography{ref}
\end{document}